\DeclareMathOperator{\curl}{curl}
\DeclareMathOperator{\N}{\mathcal{N}}
\DeclareMathOperator{\B}{\mathcal{B}}
\newcommand{\dtt}{\partial_{tt}}
\newcommand{\dt}{\partial_{t}}
\newcommand{\RR}{\mathbb{R}}
\newcommand{\Th}{\mathcal{T}_h}
\newcommand{\Vh}{\mathcal{V}_h}
\begin{document}

\title*{A Yee-like finite element scheme for Maxwell's equations on hybrid grids}
\author{Herbert Egger \inst{1}
\and
Bogdan Radu \inst{2}
}
\institute{Herbert Egger \at Institute for Numerical Mathematics, Johannes-Kepler University Linz, Austria \\
\email{herbert.egger@jku.at}
\and Bogdan Radu \at Johann Radon Institute for Computational and Applied Mathematics, Linz, Austria \\
\email{bogdan.radu@ricam.oeaw.ac.at}}

\maketitle

\abstract{
A novel finite element method for the approximation of Maxwell's equations over hybrid two-dimensional grids is studied. The choice of appropriate basis functions and numerical quadrature leads to diagonal mass matrices which allow for efficient time integration by explicit methods. %
On purely rectangular grids, the proposed schemes coincide with well-established FIT and FDTD methods. Additional internal degrees of freedom introduced on triangles allow for mass-lumping without the usual constraints on the shape of these elements. 
A full error analysis of the method is developed and numerical tests are presented for illustration.
}

\abstract*{
A finite element method for the approximation of Maxwell's equations over hybrid two-dimensional grids is studied. The choice of appropriate basis functions and numerical quadrature leads to block-diagonal mass matrices which allow for efficient time integration by explicit methods. %
On rectangular grids, the proposed schemes coincide with well-established FIT and FDTD methods. Additional internal degrees of freedom allow for mass-lumping without severe constraints on the shape of the triangles. %
A full error analysis of the method is given and numerical tests are presented for illustration.
}

\section{Introduction} \label{sec:intro}
The propagation of electromagnetic waves through a non-dispersive linear medium can be described by the time-dependent Maxwell's equations
\begin{alignat}{2}
  \varepsilon\dt E + \curl H &= -j,   \label{sys1}\\ 
          \mu\dt H + \curl E &= 0,   \label{sys2}
\end{alignat}
together with appropriate initial and boundary conditions. 
Here $E$, $H$ denote the electric and magnetic field intensities, $\varepsilon$, $\mu$ the corresponding material parameters, and $j$ describes the density of source and eddy currents. 
An efficient discretization of \eqref{sys1}--\eqref{sys2} can be achieved by the finite difference time domain (FDTD) method or the finite integration technique (FIT), see e.g. \cite{Yee66,Weiland03},
and for isotropic materials and orthogonal grids, second-order convergence can be obtained in space and time.
In order to handle complex geometries, several attempts have been made to generalize these methods to non-orthogonal and unstructured grids; see e.g. \cite{BossavitKettunen99,RylanderBondeson00,vanRienen2004} and also \cite{CodecasaPoliti08,CodecasaKapidaniSpecognaTrevisan18} for more recent results. A rigorous error analysis of a Yee-like scheme on triangles and tetrahedra was given in \cite{EggerRadu20c}, and first-order convergence in space on general unstructured grids was demonstrated theoretically and numerically. 

\smallskip 

\textbf{Scope.}
In this paper, we propose a novel Yee-like discretization scheme for hybrid grids in two space dimensions, consisting of triangles and rectangles.
The method is based on a finite element approximation with mass-lumping through numerical quadrature, which allows for a rigorous error analysis; see \cite{Cohen02,Monk92a} for background. 
On rectangular grid cells, the resulting discretization coincides with that of the FIT or FDTD method. Following \cite{ElmkiesJoly97b}, additional internal degrees of freedom are introduced on triangular grid cells, which allows us to prove discrete stability  without severe restrictions on the mesh.    
The lowest order approximation on two-dimensional hybrid grids is studied in detail.
The main ideas behind the construction of the method and its analysis however carry over to three dimensions and higher-order approximations; see \cite{EggerRadu21a,ElmkiesJoly97b,Radu22} and the discussion at the end of the paper.

\section{Description of the problem}
Let us start with completely specifying the model problem to be considered in the rest of the paper. 
We choose $\varepsilon=\mu=1$ and abbreviate $f=-\dt j$. 
Moreover, we consider the second-order form of Maxwell's equations, i.e.,
\begin{alignat}{2}
  \dtt  E +\curl(\curl E) &= f, \qquad &&\text{in }\Omega,  \label{s1}\\ 
  n\times \curl(E) &= 0,\qquad &&\text{on }\partial\Omega, \label{s2}
\end{alignat}
with simple boundary conditions.
The computational domain  $\Omega\subseteq\RR^2$ is assumed to be a bounded Lipschitz polygon and $\curl E = \partial_x E_2 - \partial_y E_1 $ denotes the curl of a vector field $E=(E_1,E_2)$ in two space dimensions.
The above differential equations are considered on a finite time interval $[0,T]$, and complemented by suitable initial conditions $E(0)=E_0$ and $\dt E(0) = E_1$.
The existence of a unique solution can then be established by sem-group theory or Galerkin approximation.
Solutions of \eqref{s1}--\eqref{s2} can further be characterized equivalently by the variational identities
\begin{align}
  (\dtt E(t),v)+(\curl E(t),\curl v) = (f(t),v), \label{sys5}
\end{align}
for all $v\in H(\curl,\Omega)= \{E\in L^2(\Omega)^2\,:\, \curl E\in L^2(\Omega)\}$ and a.a. $t \in [0,T]$. 
For abbreviation, we write $(a,b)=\int_\Omega a \cdot b \, dx$ for the scalar product on $L^2(\Omega)$ and $L^2(\Omega)^2$.

\section{A finite element method with mass-lumping}
Let $\Th$ = $\{K\}$ be a quasi-uniform shape-regular partition mesh of $\Omega$ into triangular and/or rectangular elements $K$. Different elements are allowed to meet only at edges or vertices. 
By assumption, all edges of the mesh are of similar length and we call the size $h$ of the longest edge in the mesh the global mesh size. 
\smallskip 

\noindent 
\textbf{Finite element spaces.}
For the approximation of the field $E$ on individual elements, we consider local polynomial spaces defined by
\begin{alignat}{2}
V(K) = \left\{\begin{array}{ll}
\N_0(K), \qquad & \text{ if $K$ is a square,} \\[.3em]
\N_0^+(K) = \N_0(K) + \B(K), \qquad & \text{ if $K$ is a triangle}.	
\end{array}\right. \label{sys8}
\end{alignat}
Here $\N_0(K)$ is the lowest order Nedelec space for triangles or rectangles \cite{BoffiBrezziFortin13,Nedelec80}, and $\B(K)$ is a space of three quadratic functions with vanishing tangential components. 
The corresponding degrees of freedom are depicted in Figure~\ref{fig:elements1}, and details on the basis functions are presented in Section~\ref{sec:basis}. 
\begin{figure}[ht!]
  \centering
  \begin{tikzpicture}[scale=0.6]
    \draw (0,0) -- (3,0);
    \draw (0,0) -- (0,3);
    \draw (3,0) -- (3,3);
    \draw (0,3) -- (3,3);
    
    \draw[thick,->] (2,-0.25) -- (1,-0.25);
    \draw[thick,->] (-0.25,1) -- (-0.25,2);
    
    \draw[thick,->] (1,3.25) -- (2,3.25);
    \draw[thick,->] (3.25,2) -- (3.25,1);
    
    \draw[fill,blue] (3,1.5) circle (0.12cm);
    \draw[fill,blue] (1.5,0) circle (0.12cm);
    \draw[fill,blue] (0,1.5) circle (0.12cm);
    \draw[fill,blue] (1.5,3) circle (0.12cm);    
  \end{tikzpicture}
  \qquad\qquad\qquad
  \begin{tikzpicture}[scale=0.6]
    \draw (0,0) -- (3,0);
    \draw (0,0) -- (0,3);
    \draw (3,0) -- (0,3);
    \draw[thick,->,red] (1.5,1.5) -- (1,1);
    \draw[thick,->] (1.2,2.1) -- (2,1.3);
    
    \draw[thick,->,red] (0,1.5) -- (0.70,1.5);
    \draw[thick,->] (2,-0.25) -- (1,-0.25);
    
    \draw[thick,->,red] (1.5,0) -- (1.5,0.70);
    \draw[thick,->] (-0.25,1) -- (-0.25,2);
    
    \draw[fill,blue] (1.5,1.5) circle (0.12cm);
    \draw[fill,blue] (1.5,0) circle (0.12cm);
    \draw[fill,blue] (0,1.5) circle (0.12cm);
  \end{tikzpicture}
  \caption{Degrees of freedom for the space $\N_0(K)$ on the rectangle (left) and the space $\N_0^+(K)$ on the triangle (right). The three internal degrees of freedom for the bubble functions are displayed in red and the corresponding quadrature points are depicted as blue dots. \\[-2em]} \label{fig:elements1}
\end{figure}
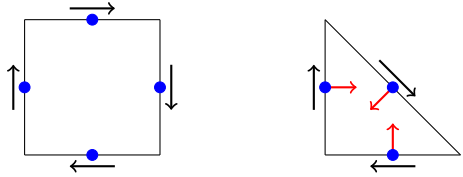%
Let us note that the finite element space $\N_0^+(K)$ was originally introduced in \cite{ElmkiesJoly97b}.
The global finite element space induced by the local spaces $V(K)$ is defined by
\begin{align*}
\Vh = \{v_h\in H(\curl;\Omega)\,:\,\, v_h|_K\in V(K) \ \forall K \in \Th\}. 
\end{align*}

\noindent 
\textbf{Quadrature.}
We use an approximation $(u,v)_{h}:=\sum_{K}(u,v)_{h,K}$ for the $L^2$-scalar product, with contributions obtained by numerical integration. On the triangle, we set
\begin{align}
(u,v)_{h,K} = |K| \sum_{i=1}^{3}\;\tfrac{1}{3}\,u(m_{K,i})\cdot v(m_{K,i}),
\label{sys9}
\end{align}
where $m_{K,i}$ is the midpoint of the edge $e_i$ opposite to vertex $i$; see Figure~\ref{fig:elements1}.
For the rectangle, we proceed differently: Here we decompose $(u,v) = (u_1, v_1) + (u_2,v_2)$ into two contributions for the orthogonal directions, and then use different quadrature rules for the two contributions, i.e.
\begin{align}
(u,v)_{h,K} = |K| \left( \sum_{i=1}^2 \tfrac{1}{2} u_1(m_{K,h,i}) v_1(m_{K,h,i}) + \sum_{j=1}^2 \tfrac{1}{2} u_2(m_{K,v,j}) v_2(m_{K,v,j})   \right).
\label{sys9}
\end{align}
Here $m_{K,h,i}$ and $m_{K,h,j}$ are the midpoints of the horizontal and vertical edges, respectively; see again Figure~\ref{fig:elements1}. 
For the semi-discretization of our model problem in space, we then consider the following inexact Galerkin approximation.
\begin{problem}\label{p:2}
Let $E_{h,0}$, $E_{h,1} \in \Vh$ be given. 
Find $E_h:[0,T]\rightarrow \Vh$ such that
\begin{align}
(\partial_{tt} E_h(t),v_h)_h + (\curl E_h(t),\curl v_h) = (f(t), v_h)  \label{sys6}
\end{align}
for all $v_h\in \Vh$ and all $t\in[0,T]$, and such that $E_h(0)=E_{h,0}$ and $\partial_{t} E_h(0)=E_{h,1}$.
\end{problem}
As we will indicate below, the implementation of this method leads to a diagonal mass matrix, which allows using explicit methods for efficient time integration.

\vspace*{-1em}

\section{Preliminary results}

By elementary computations, one can verify the following assertions, which ensure the well-posedness of Problem~\ref{p:2} and yield a starting point for our error analysis.
\begin{lemma}\label{lem:equiv}
The quadrature rule~\eqref{sys9} is exact for polynomials of degree $k\le 2$ on triangles and for polynomials of degree $k\le 1$ on squares. 
Moreover, the inexact scalar product $(\cdot,\cdot)_h$ induces a norm $\|\cdot\|_h$ on $\Vh$, which is equivalent to the $L^2$-norm on $\Vh$, and consequently Problem~\ref{p:2} has a unique solution. 
\end{lemma}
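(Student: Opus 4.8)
The plan is to treat the three assertions in turn, since each feeds into the next. For the exactness claim I would verify the quadrature rules directly on a reference element. Mapping an arbitrary triangle to the unit triangle by an affine transformation and testing the three-midpoint rule~\eqref{sys9} against the monomial basis $\{1,x,y,x^2,xy,y^2\}$ of $\mathbb{P}_2$ shows that it integrates quadratics exactly; the analogous computation on the reference square, applied separately to each of the two directional contributions in~\eqref{sys9}, confirms exactness for affine functions there. These are the ``elementary computations'' announced before the lemma.

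The heart of the statement is the norm equivalence, and for this I would first establish that the discrete form is positive definite on each local space, i.e.\ that $(v,v)_{h,K}=0$ with $v\in V(K)$ forces $v=0$. On the square this is immediate: a function in $\N_0(K)$ has first component affine in the vertical variable and constant in the horizontal one, so its values at the two horizontal edge midpoints determine it uniquely, and symmetrically for the second component; hence vanishing of all sampled values forces $v\equiv 0$. On the triangle the rule samples the full vector $v(m_{K,i})\in\RR^2$ at the three edge midpoints, giving six scalar conditions matching $\dim\N_0^+(K)=6$. Here the bubble space $\B(K)$ is essential: writing $v=v_N+v_B$ with $v_N\in\N_0(K)$ and $v_B\in\B(K)$, vanishing of $v$ at the midpoints first forces the tangential components of $v_N$ there to vanish (the bubbles having zero tangential components by construction), whence $v_N=0$ by the usual unisolvence of $\N_0(K)$; the remaining condition that $v_B$ vanish at the three midpoints then forces $v_B=0$, since the normal components of the three bubbles at the midpoints are linearly independent. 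This injectivity of the nodal evaluation is precisely what the construction in Section~\ref{sec:basis} is designed to provide, and it is the main obstacle of the proof.

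With local positive definiteness in hand, the norm equivalence follows by a routine scaling argument. On the fixed reference element the quadrature expression and the $L^2$-norm are two norms on the finite-dimensional space $V(\hat K)$, hence equivalent with fixed constants; transporting to a physical element $K$ through the affine element map and the associated covariant transform, both norms acquire the same Jacobian scaling, while shape-regularity and quasi-uniformity of $\Th$ keep the resulting constants uniform in $K$ and $h$. Summing the elementwise estimates over all $K\in\Th$ yields $c\|v_h\|_{L^2(\Omega)}\le\|v_h\|_h\le C\|v_h\|_{L^2(\Omega)}$ for all $v_h\in\Vh$, so $\|\cdot\|_h$ is a norm equivalent to the $L^2$-norm.

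Finally, unique solvability of Problem~\ref{p:2} is a direct consequence. Expanding $E_h(t)$ in a basis of $\Vh$ turns~\eqref{sys6} into a linear system $M_h\ddot{\mathbf{e}}(t)+A_h\mathbf{e}(t)=\mathbf{f}(t)$, where the mass matrix $M_h$ represents $(\cdot,\cdot)_h$ and is symmetric positive definite by the norm equivalence, hence invertible. The system is therefore equivalent to a first-order linear ODE with the prescribed initial data $E_h(0)=E_{h,0}$ and $\partial_t E_h(0)=E_{h,1}$, and existence and uniqueness follow from standard ODE theory.
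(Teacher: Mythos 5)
Your proof is correct and supplies exactly the ``elementary computations'' that the paper leaves unstated (Lemma~\ref{lem:equiv} is given without proof): direct verification of exactness on reference elements, local positive definiteness of the lumped form via unisolvence of the midpoint evaluations (with the bubble enrichment handling the normal components on triangles), a scaling argument under shape regularity for the uniform norm equivalence, and invertibility of the resulting mass matrix for unique solvability of the ODE system. This matches the intended argument, so there is nothing to correct.
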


As a second ingredient, let us recall some results about polynomial interpolation. 
We denote by $\Pi_h : H^1(\Th)^2 \to \Vh$ the projection defined element-wise by
\begin{align}\label{sys8}
(\Pi_h E)|_K \coloneqq \Pi_K E|_K
\end{align}
where $\Pi_K : H^1(K) \to \N_0(K)$ is the standard interpolation operator for the lowest order Nedelec space $\N(K)$ on both triangles and squares; see \cite{BoffiBrezziFortin13,Nedelec80} for details.
We further denote by $\pi_h^0 : L^2(\Omega) \to P_0(\Th)$ the $L^2$-orthogonal projection onto piecewise constants; the same symbol is used for the projection of vector-valued functions.
\begin{lemma}\label{lem:interp}
Let $K\in \Th$ and $\Pi_h$ defined as in \eqref{sys8}.
Then
\begin{align}
  \|E-\Pi_h E\|_{L^2(K)}&\leq Ch\|E\|_{H^1(K)}, \label{eq:interperr1}\\
  \|\curl(E-\Pi_h E)\|_{L^2(K)}&\leq Ch\|\curl E\|_{H^1(K)}, \label{eq:interperr2}\\
    \|E-\pi_h^0E\|_{L^2(K)}&\le Ch\|E\|_{H^1(K)}, \label{eq:l2err}
\end{align}
whenever $E$ is regular enough, with a constant $C$ independent of $h$. 
\end{lemma}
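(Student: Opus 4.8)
The plan is to establish all three estimates by the standard combination of a scaling argument and the Bramble--Hilbert lemma on a fixed reference element, treating the triangle and the square in parallel. I would begin with the projection estimate \eqref{eq:l2err}, since $\pi_h^0$ acts componentwise and reduces at once to the scalar case. On a reference cell $\hat K$ the operator $\pi^0$ reproduces constants and is bounded on $L^2(\hat K)$, so Bramble--Hilbert yields $\|\hat E-\pi^0\hat E\|_{L^2(\hat K)}\le C|\hat E|_{H^1(\hat K)}$; pulling this back through the affine map $F_K:\hat K\to K$ and using quasi-uniformity, so that the Jacobian $B_K=\mathrm{d}F_K$ scales like $h$ in each direction, produces the factor $h$ and gives \eqref{eq:l2err}.

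For the interpolation estimate \eqref{eq:interperr1} I would transport $E$ to the reference element by the affine covariant (Piola) transform $E=B_K^{-\top}\,\hat E\circ F_K^{-1}$; note that a rectangle is an affine image of the reference square, so this applies uniformly to both element types. Under this transform the lowest-order Nédélec interpolant commutes, that is, the pullback of $\Pi_K E$ equals $\hat\Pi\hat E$. Two facts are then needed on $\hat K$. First, $\hat\Pi$ is bounded from $H^1(\hat K)^2$ into $\N_0(\hat K)$: the edge degrees of freedom are integrals of tangential moments and are therefore controlled by the $H^1$-norm via the trace theorem. Second, $\hat\Pi$ reproduces the constant vector fields, which lie in $\N_0(\hat K)$. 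Bramble--Hilbert then gives $\|\hat E-\hat\Pi\hat E\|_{L^2(\hat K)}\le C|\hat E|_{H^1(\hat K)}$, and transforming back while tracking $\|B_K\|$ and $\|B_K^{-1}\|$ under shape-regularity delivers \eqref{eq:interperr1}.

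The curl estimate \eqref{eq:interperr2} I would not prove directly but reduce to \eqref{eq:l2err} via the commuting-diagram property of the interpolant. On both triangles and squares one checks that $\curl\N_0(K)$ consists of constants and that $\curl\Pi_K E=\pi_K^0\,\curl E$, the elementwise $L^2$-projection onto constants. Hence $\curl(E-\Pi_K E)=\curl E-\pi_K^0\,\curl E$, and applying the scalar version of \eqref{eq:l2err} to the function $\curl E\in H^1(K)$ yields the claim immediately.

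The main obstacle I anticipate is not any single estimate but the careful bookkeeping of the transformation rules: one must verify that the covariant Piola map intertwines $\Pi_K$ with $\hat\Pi$ and that all metric factors scale correctly under the shape-regularity and quasi-uniformity hypotheses, uniformly over the two element types. The boundedness of $\hat\Pi$ on $H^1(\hat K)^2$, rather than merely on the non-closed space on which the edge functionals are naively defined, is the one genuinely delicate point, and it rests on the trace inequality; everything else is routine.
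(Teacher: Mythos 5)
Your argument is correct, and it is essentially the argument the paper implicitly relies on: the paper states Lemma~\ref{lem:interp} without proof, citing \cite{BoffiBrezziFortin13,Nedelec80}, and the standard proof found there is precisely your combination of the covariant (Piola) pullback to a reference element, Bramble--Hilbert, and the commuting-diagram identity $\curl\Pi_K E=\pi_K^0\curl E$ that reduces \eqref{eq:interperr2} to the scalar version of \eqref{eq:l2err}. You also correctly flag the two genuinely delicate points --- that the affine theory applies to rectangles (though not to general quadrilaterals, which the paper excludes) and that the edge functionals are bounded on $H^1(\hat K)^2$ via the trace theorem --- so no gap remains.
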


\noindent 
Having introduced all the required tools, we can now state and prove our main result.
\begin{theorem}\label{thm:err}
Let $E$ and $E_h$ denote the solutions of \eqref{sys5} and \eqref{sys6} with initial values set by $E_h(0)= \Pi_h E(0)$ and $\partial_t E_h(0) = \Pi_h \partial_t E(0)$. 
Then
\begin{align*}
\|\dt(E - E_h)\|_{L^\infty(0,T;L^2(\Omega))} + \|\curl(E - E_h)\|_{L^\infty(0,T;L^2(\Omega))} \leq C(E,T) \, h^2
\end{align*}
with constant $C$ depending on the norm of $E$ but independent of the mesh size $h$. 
\end{theorem}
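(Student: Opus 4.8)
The natural approach is the energy method for mass-lumped discretizations of second-order wave equations, in which both the numerical quadrature and the nonconformity of the interpolation enter as consistency errors. I would first split the error through the Nedelec interpolant, writing $E-E_h = \rho - \theta_h$ with $\rho := E - \Pi_h E$ and $\theta_h := E_h - \Pi_h E \in \Vh$; the chosen initial data give $\theta_h(0) = \dt\theta_h(0) = 0$. Subtracting \eqref{sys6} from \eqref{sys5} tested against $v_h \in \Vh$ and rearranging produces the error equation
\begin{align*}
(\dtt\theta_h, v_h)_h + (\curl\theta_h, \curl v_h) = \delta_1(v_h) + \delta_2(v_h),
\end{align*}
with mass-consistency functional $\delta_1(v_h) := (\dtt E, v_h) - (\dtt\Pi_h E, v_h)_h$, combining interpolation and quadrature error, and stiffness-consistency functional $\delta_2(v_h) := (\curl(E - \Pi_h E), \curl v_h)$.

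The next step is the energy estimate. Testing with $v_h = \dt\theta_h$ makes the left-hand side a total time derivative, and integrating over $[0,t]$ with vanishing initial data gives
\begin{align*}
\tfrac12\big(\|\dt\theta_h(t)\|_h^2 + \|\curl\theta_h(t)\|^2\big) = \int_0^t \big(\delta_1(\dt\theta_h) + \delta_2(\dt\theta_h)\big)\,ds .
\end{align*}
Writing $\Phi(t)$ for the left-hand side, Lemma~\ref{lem:equiv} shows that $\Phi$ controls $\|\dt\theta_h\|_{L^2}$ and $\|\curl\theta_h\|_{L^2}$, so it suffices to bound the right-hand side by $\Phi$-terms amenable to Gronwall's inequality. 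Because a direct bound on $\delta_i(\dt\theta_h)$ would cost a power of $h$ through an inverse estimate, I would integrate by parts in time, converting each $\dt\theta_h$-tested term into a boundary contribution evaluated with $\theta_h$ at time $t$ plus a time-integral of a $\theta_h$-tested term; the latter is absorbed using $\|\theta_h(t)\|_{L^2}\le\int_0^t\|\dt\theta_h\|_{L^2}$. For $\delta_2$ I would invoke the commuting-diagram identity $\curl\Pi_h E = \pi_h^0\curl E$, so that $\delta_2(v_h) = ((I-\pi_h^0)\curl E, \curl v_h)$; the $L^2$-orthogonality of $(I-\pi_h^0)\curl E$ to piecewise constants then annihilates the lowest-order part of $\curl v_h$, leaving only the contribution of the quadratic bubbles, whose curl has vanishing mean on every triangle.

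The main obstacle is the mass-consistency term. On rectangles the quadrature error $(\dtt\Pi_h E, v_h)_h - (\dtt\Pi_h E, v_h)$ has, by Lemma~\ref{lem:equiv} and an elementary trapezoidal computation, the structure of a scaled broken-gradient product $\sum_K c\,h_K^2\,(\partial\,\dtt\Pi_h E, \partial v_h)_K$; tested against $v_h = \dt\theta_h$ this is only first order in $\sqrt{\Phi}$ once an inverse estimate is applied, so the stated rate cannot follow from Lemmas~\ref{lem:equiv}--\ref{lem:interp} alone. Extracting the additional power of $h$ is the crux: it requires exploiting the smoothness of $\partial\,\dtt\Pi_h E$ together with the cancellation arising upon assembly on the quasi-uniform, shape-regular mesh — in essence a supraconvergence argument resting on the symmetric, cell-centered placement of the edge degrees of freedom and of the quadrature nodes, combined with the integration by parts in time above; an analogous cancellation must be established for the bubble contribution to $\delta_2$. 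Once the consistency functionals are controlled at second order in this sense, Gronwall's inequality yields $\Phi(t)\le C(E,T)^2 h^4$, and combining the resulting bound on $\theta_h$ with the interpolation error $\rho$ in the norms $\dt(\cdot)$ and $\curl(\cdot)$ of the statement — which are subject to the same cancellation on the structured mesh — gives the claimed $O(h^2)$ estimate via the triangle inequality.
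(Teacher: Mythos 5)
Your overall framework --- splitting through the Nedelec interpolant, the discrete error equation, the energy identity obtained by testing with $v_h=\dt\theta_h$, and the integration by parts in time to avoid inverse estimates --- is exactly the paper's Steps 1--5, so structurally you are on the same track. The genuine gap is the step you yourself call ``the crux'': the supraconvergence/cancellation argument that is supposed to upgrade the consistency functionals from first to second order is asserted but never carried out, and it cannot be carried out in the generality of the theorem. The quadrature bound of Lemma~\ref{lem:quaderr} and the interpolation estimates \eqref{eq:interperr1}--\eqref{eq:l2err} are sharp at order $h$ on a general shape-regular hybrid mesh; the symmetric node placement that drives supraconvergence on tensor-product grids is destroyed on the triangular part, and the paper explicitly remarks that second-order convergence is in general lost on unstructured grids and reports observed $O(h)$ convergence as \emph{agreeing} with the theorem. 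Worse, even if you could control the discrete component $\theta_h$ at second order, the projection component $\rho=E-\Pi_h E$ is itself only $O(h)$ in the norms of the statement by Lemma~\ref{lem:interp}, so the final triangle inequality could never produce $O(h^2)$ through this splitting.

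The resolution is that the paper's own proof does not establish the displayed $h^2$ rate for the norms either: its Step~4 bounds the \emph{squares} of $\|\dt\psi_h\|_{L^\infty(0,T;L^2(\Omega))}$ and $\|\curl\psi_h\|_{L^\infty(0,T;L^2(\Omega))}$ by $Ch^2$, i.e.\ the error itself by $Ch$, consistent with Step~1 and with the numerical section; the exponent in the theorem should be read accordingly. With that correction your argument closes without any speculative cancellation: bound $\delta_1$ by combining \eqref{eq:l2err} for $(\dtt(E-\Pi_h E),v_h)$ with Lemma~\ref{lem:quaderr} for the quadrature defect $\sigma_h(\Pi_h\dtt E,v_h)$, each yielding $Ch\|\dtt E\|_{H^1(\Th)}\|v_h\|_{L^2(\Omega)}$; bound $\delta_2$ after the time integration by parts via \eqref{eq:interperr2}; then Young's inequality and absorption (no Gronwall needed) give $\Phi(t)\le C(E,T)^2h^2$ and the first-order estimate follows. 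Two minor remarks: your commuting-diagram identity $\curl\Pi_h E=\pi_h^0\curl E$ is correct and does kill the lowest-order part of $\curl v_h$, but the surviving bubble contribution is still only $O(h)$ after an inverse estimate, so nothing is gained over the paper's direct use of \eqref{eq:interperr2}; and the ``broken-gradient'' structure of the rectangle quadrature error is the right intuition for why pure FDTD grids are second order, but it does not survive assembly across the triangle--rectangle interface.
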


\section{Proof of Theorem~\ref{thm:err}}

Apart from some technical details, the following analysis follows by standard arguments. For completeness and convenience of the reader, we present all the details.

\smallskip 
\noindent
\textbf{Step~1. Error splitting and estimate for the projection error.}
In the usual manner, we begin by splitting the overall discretization error via
\begin{align}
    E-E_h = (E-\Pi_h E) + (\Pi_h E - E_h) =: -\eta + \psi_h,
\end{align}
into a projection error and a discrete error component. 
By the estimates of Lemma~\ref{lem:interp}, we immediately obtain
\begin{align*}
\|\dt\eta\|_{L^\infty(0,T;L^2(\Omega))} &+ \|\curl\eta\|_{L^\infty(0,T;L^2(\Omega))} \\
&\leq Ch\left(\|\dt E\|_{L^\infty(0,T;H^1(\Th))} + \|\curl E\|_{L^\infty(0,T;H^1(\Th))}\right),
\end{align*}
which already covers the first error component.

\smallskip 

\noindent 
\textbf{Step~2. Discrete error equation.}
By subtracting \eqref{sys9} from \eqref{sys5} with $v=v_h$, 
we can see that the discrete error $\psi_h$ satisfies the identity
\begin{align*}
  (\dtt\psi_h(t),v_h)_h&+(\curl\psi_h(t),\curl v_h) = \\ &(\dtt\eta(t),v_h)+(\curl\eta(t),\curl v_h) +
  \sigma_h(\Pi_h\dtt u(t),v_h)
\end{align*}
for all $v_h \in \Vh$ and $0 \le t \le T$, with quadrature error
\begin{align} \label{sys10}
\sigma_h(E,v) = (E,v)_{h} - (E,v).
\end{align}
We can further split $\sigma_h(E,\phi) = \sum\nolimits_{K\in \Th}\sigma_K(E,\phi)$ into element contributions defined by $\sigma_K(E,\phi) = (E,\phi)_{h,K}-(E,\phi)_K$.
Moreover, 
$\psi_h(0)=\partial_{t} \psi_h(0)=0$, due to the choice of initial conditions for the discrete problem.

\smallskip 

\noindent 
\textbf{Step~3. Estimates for the quadrature error.}
To further proceed in our analysis, we now quantify the local quadrature error in more detail.
\begin{lemma}\label{lem:quaderr}
Let $E \in L^2(\Omega)^2$ with $E|_K \in H^1(K)^2$ for all $K \in \Th$. 
Then 
\begin{alignat*}{2}
|\sigma_K(\Pi_h E, \phi_h)|\leq C h\|E\|_{H^1(K)}\|\phi_h\|_{L^2(K)}
\end{alignat*}
for all $\phi_h\in \Vh$ and all $K\in \Th$ with constant $C$ independent of the element $K$.
\end{lemma}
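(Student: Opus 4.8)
The plan is to exploit the exactness of the quadrature (Lemma~\ref{lem:equiv}) together with the observation that $\Pi_h E$ lies only $O(h)$ away from a piecewise constant, so that only a low-order remainder can contribute to the quadrature error. Writing $c := \pi_h^0 E|_K$ for the $L^2$-average of $E$ on $K$ and using that $\N_0(K)$ contains the constants, we have $\Pi_K c = c$, hence $\Pi_K E = \Pi_K(E-c) + c$. By linearity of $\sigma_K$ in its first argument,
\begin{align*}
\sigma_K(\Pi_h E, \phi_h) = \sigma_K(c, \phi_h) + \sigma_K(\Pi_K(E-c), \phi_h).
\end{align*}
The strategy is then to show that the first term vanishes identically and that the second is of the claimed order $O(h)$.

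For the first term I would verify $\sigma_K(c,\phi_h)=0$ on every element type by matching the degree of the integrand against the exactness from Lemma~\ref{lem:equiv}. On a triangle $\phi_h\in\N_0^+(K)$ has components of degree $\le 2$, so $c\cdot\phi_h$ has degree $\le 2$ and is integrated exactly. On a square I would use the directional splitting in \eqref{sys9}: since $\phi_h\in\N_0(K)$ has a first component depending only on $y$ and a second component depending only on $x$, each of the two directional rules—essentially trapezoidal rules, exact for affine integrands—integrates $c\cdot\phi_h$ without error. Thus $\sigma_K(c,\phi_h)=0$ in both cases.

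For the remainder I would first establish the uniform continuity bound $|\sigma_K(u,v)|\le C\|u\|_{L^2(K)}\|v\|_{L^2(K)}$ for all $u,v\in V(K)$. This follows from $|\sigma_K(u,v)|\le |(u,v)_{h,K}|+|(u,v)_K|$, Cauchy--Schwarz for both the exact and the discrete inner product, and the local norm equivalence $\|\cdot\|_{h,K}\le C\|\cdot\|_{L^2(K)}$ on the finite-dimensional space $V(K)$, the latter obtained by scaling to the reference element (the local counterpart of Lemma~\ref{lem:equiv}, with $C$ uniform in $K$ by quasi-uniformity). Applying this with $u=\Pi_K(E-c)$ and $v=\phi_h$ reduces the claim to bounding $\|\Pi_K(E-c)\|_{L^2(K)}$.

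Finally, I would control this norm by adding and subtracting $E-c$: the interpolation estimate \eqref{eq:interperr1} gives $\|\Pi_K(E-c)-(E-c)\|_{L^2(K)}\le Ch\|E-c\|_{H^1(K)}$, while \eqref{eq:l2err} gives $\|E-c\|_{L^2(K)}\le Ch\|E\|_{H^1(K)}$. Since shifting by a constant leaves the $H^1$-seminorm unchanged, $\|E-c\|_{H^1(K)}\le C\|E\|_{H^1(K)}$, and combining these yields $\|\Pi_K(E-c)\|_{L^2(K)}\le Ch\|E\|_{H^1(K)}$, closing the argument. The only genuinely delicate point is the vanishing of $\sigma_K(c,\phi_h)$ on squares, where the anisotropic, component-wise quadrature must be matched against the anisotropic structure of $\N_0(K)$; the rest is standard interpolation and scaling.
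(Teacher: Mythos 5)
Your proof is correct and follows essentially the same route as the paper: subtract the piecewise-constant projection (whose quadrature error vanishes by the exactness in Lemma~\ref{lem:equiv}, since $\Pi_K(E-c)=\Pi_K E-\pi_h^0 E$), bound $\sigma_K$ on the discrete space via Cauchy--Schwarz and the norm equivalence, and estimate $\|\Pi_K E-\pi_h^0 E\|_{L^2(K)}$ by the triangle inequality and Lemma~\ref{lem:interp}. Your explicit degree-counting for the vanishing of $\sigma_K(c,\phi_h)$ on triangles and, via the directional splitting, on squares merely spells out a step the paper leaves implicit.
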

\begin{proof}
Using Lemma~\ref{lem:equiv}, we deduce that $(u_h^0,v_h)_K=(u_h^0,v_h)_{h,K}$ for all $u_h^0 \in P_0(K)^2$ and $v_h \in V(K)$. We can then estimate the quadrature error by
\begin{align*}
|\sigma_K(\Pi_h u, v_h)|
 &=|\sigma_K(\Pi_h u-\pi_h^0u,v_h)|\le c\|\Pi_h u-\pi_h^0u\|_{L^2(K)}\|v_h\|_{L^2(K)}\\
&\le c'h\|u\|_{H^1(K)}\|v_h\|_{L^2(K)},
\end{align*}
where we used the Cauchy-Schwarz inequality and the norm equivalence of Lemma~\ref{lem:equiv} and the approximation properties of the projections from Lemma~\ref{lem:interp}.
\end{proof}

\smallskip 
\noindent
\textbf{Step~4. Estimate for the discrete error.}
Taking $v_h=\partial_t \psi_h(t)$ as test function in the discrete error equation and integrating from $0$ to $t$ leads to
\begin{align} \label{eq:inequality} 
&\frac{1}{2}\left(\|\dt\psi_h(t)\|_h^2+\|\curl\psi_h(t)\|_{L^2(\Omega)}^2\right)\\
&\quad = \smallint_0^t (\dtt \psi_h(s), \dt \psi_h(s))_h + (\curl \psi_h(s), \curl \dt \psi_h(s)) \, ds \notag\\
&\quad =\smallint_0^t(\dtt\eta(s),\dt\psi_h(s)) + (\curl\eta(s),\curl\dt\psi_h(s)) + 
\sigma_h(\Pi_h\dtt u(s),\dt\psi_h(s)) \, ds \notag.
\end{align}
The three terms can now be estimated separately. 
Using Cauchy-Schwarz and Young inequalities, the first term may be bounded by
\begin{align*}
(i) \leq ch^2\|\dtt E\|_{L^1(0,t,H^1(\Th))}^2 + \tfrac{1}{4}\|\partial_{t}\psi_h\|^2_{L^\infty(0,t,L^2(\Omega))}.
\end{align*}
For the second term, we utilize that 
\begin{align*}
(ii) &= \smallint_0^t (\curl (E - \Pi_h E), \curl \dt \psi_h)) \, ds \\
&= (\curl (E - \Pi_h E)(t), \curl \psi_h(t)) - \smallint_0^t (\curl (\dt E - \Pi_h \dt E), \curl \psi_h) \, ds \\
& \le C h^2 \big(\|\curl E\|_{L^\infty(0,t;H^1(\Th))}^2 + \|\curl \dt E\|_{L^1(0,t;L^2(\Omega))}^2 \big) + \tfrac{1}{4} \|\psi_h\|_{L^\infty(0,t;L^2(\Omega))}^2.
\end{align*}%
The third term can finally be estimated using Lemma~\ref{lem:quaderr} according to
\begin{align*}
(iii)&=\smallint_0^t ch\|\dtt E(s)\|_{H^1(\Omega)}\|\dt\psi_h(s)\|_{L^2(\Omega)}\,ds \\
&\leq ch^2\|\dtt E\|_{L^1(0,t,H^1(\Omega))}^2 + \tfrac{1}{4}\|\partial_{t}\psi_h\|^2_{L^\infty(0,t,L^2(\Omega))}
\end{align*}
Using these estimates in the inequality \eqref{eq:inequality}, absorbing all the terms with the test function into the left side, and taking the supremum over $t\in[0,T]$, after applying the norm equivalence of Lemma~\ref{lem:equiv} to some terms, then leads to the estimate 
\begin{align*}
&\|\dt\psi_h\|_{L^\infty(0,T;L^2(\Omega))}^2 + \|\curl\psi_h\|_{L^\infty(0,T;L^2(\Omega))}^2 \\
& \qquad \leq C h^2\big(\|\dtt E\|_{L^1(0,T;H^1(\Th))}^2 + \|\dt E\|_{L^1(0,T;H^1(\Th))}^2 + \|\curl E\|_{L^\infty(0,T;H^1(\Th))}^2\big)
\end{align*}
for the discrete error component; one may also take the square root in all terms.

\smallskip 
\noindent 
\textbf{Step~5.}
The proof of the theorem is completed by applying the triangle inequality to the error splitting in Step~1 and adding up the estimates for the projection error $\eta$ and the discrete error component $\psi_h$.

\section{Implementation} \label{sec:basis}

For completeness of the presentation, let us briefly discuss the choice of basis functions for the local finite element spaces $\N_0(K)$ and $\N_0^+(K)$ which, together with the numerical quadrature leads to diagonal mass matrices. 

\smallskip 
\noindent 
\textbf{Rectangle.}
On quadrilateral elements $K$, we choose the standard basis for the lowest order Nedelec space $\N_0(K)=\text{span}\{\Phi_{h,i}, \Phi_{v,i}: i=1,\ldots,2\}$; see \cite{BoffiBrezziFortin13,Nedelec80}. These functions have the following properties: The function
$\Phi_{h,i}$ associated to a horizontal edge $e_{h,i}$ vanishes identically on the opposite horizontal edge, and  $\Phi_{v,j}$ associated to for the vertical edge $e_{v,j}$ vanishes on the opposite vertical edge. 
Hence the local mass matrix produced by the quadrature rule $(u,v)_{K,h}$ for every rectangle is diagonal.

\smallskip 
\noindent 
\textbf{Triangle.}
Let $\{\lambda_i\}$ be the barycentric coordinates of the element $K$.
For every edge $e_k=e_{ij}$ pointing from vertex $i$ to  $j$, and thus opposite to $k$, we define the two basis functions
\begin{align*}
\Phi_{ij}^{B} &= \lambda_i \lambda_j \nabla \lambda_k
\qquad \text{and} \qquad \\
\Phi_{ij} &= \lambda_i \nabla \lambda_j - \lambda_j \nabla \lambda_i + \alpha_{ij} \Phi_{ij}^{B} + \beta_{ij} \Phi_{jk}^{B} + \gamma_{ij} \Phi_{ki}^{B}.
\end{align*}
Then $\N_0^+(K)=\operatorname{span}\{\Phi_{12},\Phi_{23},\Phi_{31},\Phi_{12}^{B},\Phi_{23}^{B},\Phi_{31}^{B}\}$.
The bubble functions $\Phi_{ij}^{B}$ have vanishing tangential components on the edge are $e_k$, and they vanish identically on the two remaining edges $e_i$, $e_j$. 
The functions $\Phi_{ij}$ are modified Nedelec basis functions. They have vanishing tangential components on the two edges $e_i$, $e_j$, and by appropriate choice of the parameters $\alpha_{ij}$, $\beta_{ij}$, $\gamma_{ij}$, their normal components on all edge midpoints $m_{K,i}$ can be made zero. 
As a consequence, the local mass matrix produced by the scalar product $(u,v)_{K,h}$ for the triangle becomes diagonal.

\medskip 
\noindent
\textbf{Summary.}
The global mass matrix is obtained by assembling the local mass matrices, which are diagonal, and hence has inherits this property.

\section{Numerical illustration}
We consider the computational domain $\Omega = \Omega_1\cup\Omega_2$ where $\Omega_1 = (0,2)\times (-1,1)$ and  $\Omega_2 = \big((2,4)\times (-1,1))\setminus B_{0.3}(3,0)$, where $B_{r}(x,y)$ denotes the ball with radius $r$ around midpoint $(x,y)$.
The two subdomains are meshed by rectangles and triangles, respectively. 
For our test problem, we consider the wave equation \eqref{s1}. The boundary $\partial \Omega$ is split into several parts and as boundary conditions, we impose
\begin{alignat*}{2}
n \times E &= \sin(10\cdot t) \cdot e^{-10y^2}, \qquad && \text{on } \partial\Omega_{\text{left}}, \\ 
n \times E &= 0, \qquad && \text{on } \partial\Omega_{\text{ball}},\\ n \times \curl E &= 0, \qquad && \text{else.}    
\end{alignat*}
The initial conditions are chosen as $E(0)=\dt E(0)=0$. 
This corresponds to a pulse entering at the left boundary, propagating through the domain, and getting reflected at the 
walls of the box and the circular inclusion.
Some snapshots of the solution are depicted in Figure~\ref{fig:result}.
\begin{figure}
    \centering
    \includegraphics[scale=0.24]{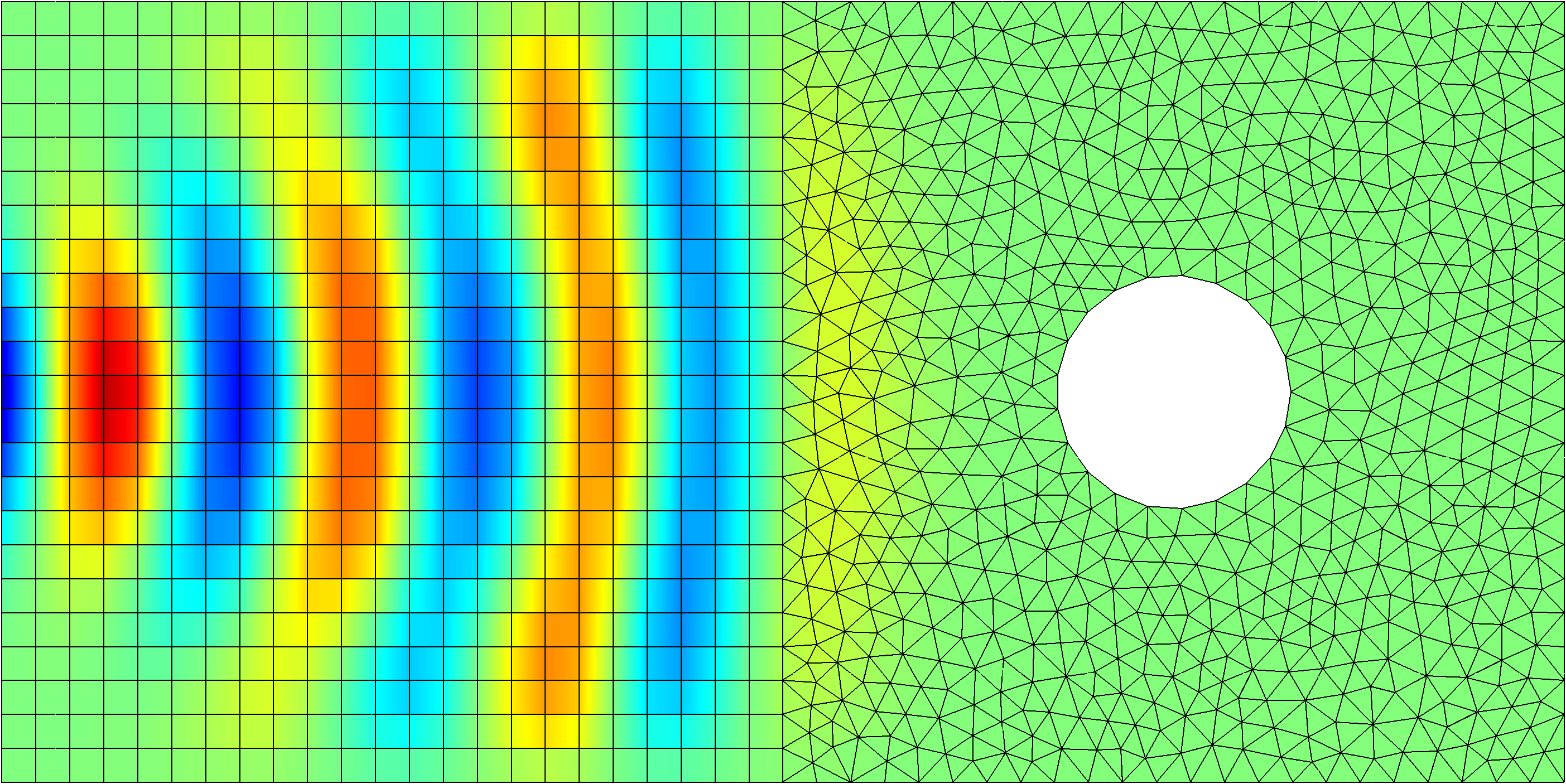}
    \;
    \includegraphics[scale=0.24]{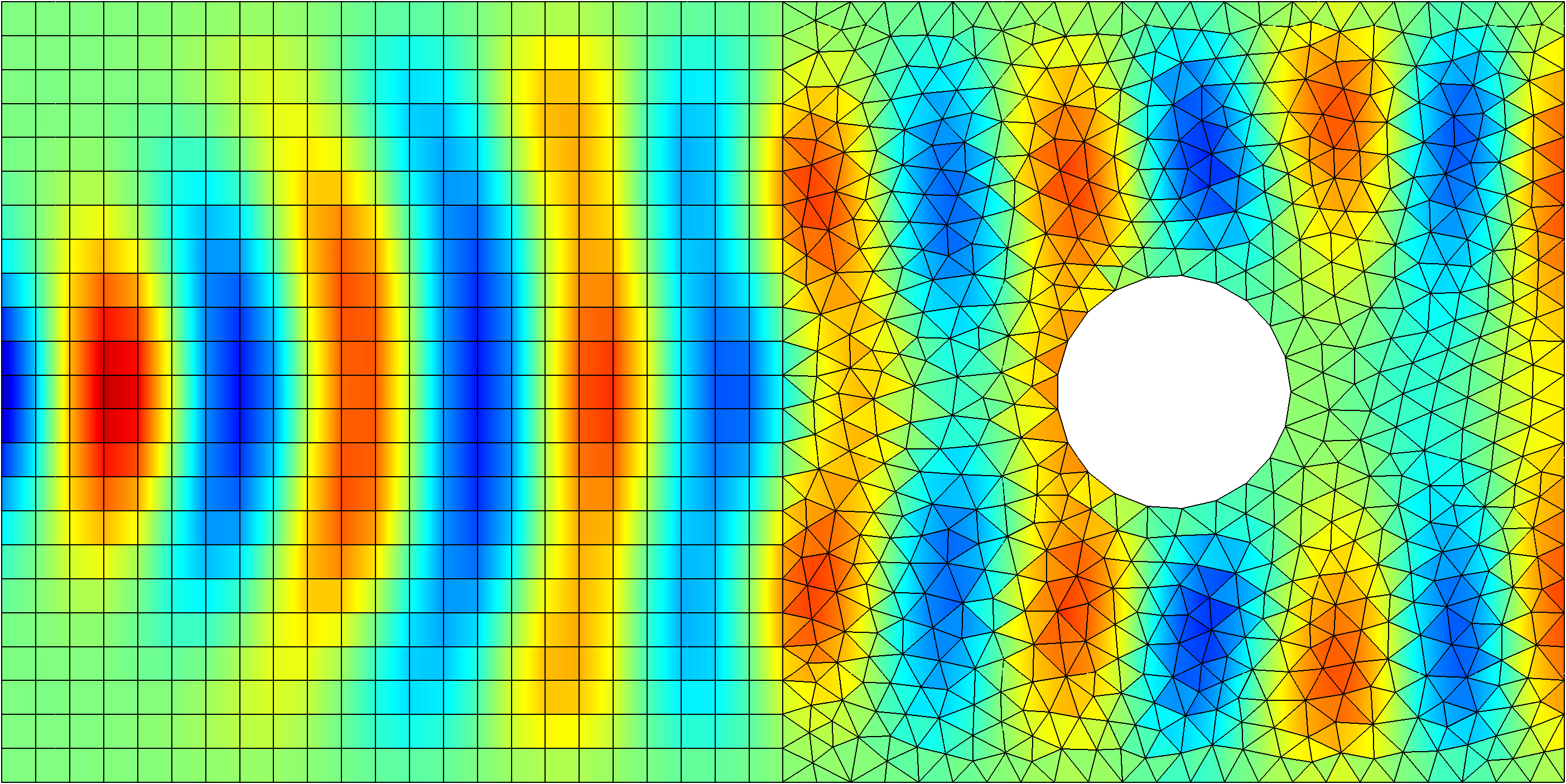}
    \caption{The first $E_1$ component of the solution $E=(E_1,E_2)$ at time steps $t=2.3$ and $t=5$ showing the scattering at the sphere.}
    \label{fig:result}
\end{figure}
Let us remark that no reflections are observed at the interface between the two meshes. 
In our numerical tests, we observe linear convergence $O(h)$ of the error. This coincides with the theoretical predictions of Theorem~\ref{thm:err}, and also demonstrates that the error estimates are sharp. 
Note that second order convergence is in general lost for Yee-like approximations on unstructured grids; also see \cite{Radu22}.

\end{document}